\def\[#1\]{\begin{align}#1\end{align}}
\def\*[#1\]{\begin{align*}#1\end{align*}}
\newcommand{\st}{\,:\,}
\DeclareMathOperator*{\newlim}{\mathrm{lim}\vphantom{\mathrm{infsup}}}
\DeclareMathOperator*{\newmin}{\mathrm{min}\vphantom{\mathrm{infsup}}}
\DeclareMathOperator*{\newmax}{\mathrm{max}\vphantom{\mathrm{infsup}}}
\DeclareMathOperator*{\newinf}{\mathrm{inf}\vphantom{\mathrm{infsup}}}
\renewcommand{\lim}{\newlim}
\renewcommand{\min}{\newmin}
\renewcommand{\max}{\newmax}
\renewcommand{\inf}{\newinf}
\def \ProbUnion{P\left(\bigcup_{i=1}^N A_i\right)}
\def \uNEWI{\hbar_{\textrm{NEW-1}}}
\def \lKAT{\ell_{\textrm{KAT}}}
\def \lDS{\ell_{\textrm{DS}}}
\def \lDC{\ell_{\textrm{DC}}}
\def \lGK{\ell_{\textrm{GK}}}
\def \uGREEDY{\hbar_{\textrm{Greedy}}}
\def \bk{\boldsymbol{k}}
\def \bc{\boldsymbol{c}}
\def \balpha{\boldsymbol{\alpha}}
\def \bzero{\boldsymbol{0}}
\def \bSigma{\boldsymbol{\Sigma}}
\def \ba{\boldsymbol{a}}
\def \bb{\boldsymbol{b}}
\def \bL{\boldsymbol{L}}
\def \bQ{\boldsymbol{Q}}
\def \st{\textrm{s.t. }}
\newcommand{\bbeta}{\boldsymbol{\beta}}
\newcommand{\bx}{\boldsymbol{x}}
\newcommand{\bA}{\boldsymbol{A}}
\crefname{lemma}{Lemma}{Lemmas}
\crefname{corollary}{Corollary}{Corollaries}
\crefname{theorem}{Theorem}{Theorems}
\let\reftagform@=\tagform@
\def\tagform@#1{\maketag@@@{\ignorespaces\textcolor{gray}{(#1)}\unskip\@@italiccorr}}
\renewcommand{\eqref}[1]{\textup{\reftagform@{\ref{#1}}}}
\declaretheorem[style=plain,numberwithin=section,name=Theorem]{theorem}
\declaretheorem[style=plain,sibling=theorem,name=Lemma]{lemma}
\declaretheorem[style=definition,sibling=theorem,name=Definition]{definition}
\declaretheorem[style=remark,qed=$\triangleleft$,sibling=theorem,name=Remark]{remark}
\numberwithin{theorem}{section}
\begin{document}


\title[A Short Survey]{A Short Survey on Bounding the Union Probability Using Partial Information}

\author{Jun Yang}
\address{Department of Statistical Sciences\\ University of Toronto, Canada}
\email{jun@utstat.toronto.edu}

\author{Fady Alajaji}
\address{Department of Mathematics and Statistics\\ Queen's University, Canada}
\email{fa@queensu.ca}

\author{Glen Takahara}
\email{takahara@mast.queensu.ca}

\begin{abstract}
	This is a short survey on existing upper and lower bounds on the probability of the union of a finite number of events using partial information given in terms of the individual or pairwise event probabilities (or their sums). New proofs for some of the existing bounds are provided and new observations regarding the existing Gallot--Kounias bound are given.
\end{abstract}

\maketitle

	\begin{center}
	\begin{minipage}{1\linewidth}
		\setcounter{tocdepth}{1}
		\tableofcontents
	\end{minipage}
	\end{center}
\newpage
\allowdisplaybreaks

\section{Introduction}
%
%
Consider a finite family of events $\{A_1,\dots,A_N\}$ in a general probability space $(\Omega ,\mathscr{F},P)$, where $N$ is a fixed positive integer. Note that there are only finitely many Boolean atoms\footnote{The problem can be directly reduced to the finite probability space case. Thus,
	 we will consider finite probability spaces where $\omega\in\Omega$ denotes an elementary outcome instead of an atom.} specified by the $A_i$'s \cite{DeCaen1997}. We are interested in bounding the probability of the finite union of events, i.e., $\ProbUnion$, in terms of partial probabilistic event information such as knowing the individual event probabilities, $\{P(A_1),\dots,P(A_N)\}$, and the pairwise event probabilities $\{P(A_i\cap A_j), i\neq j\}$, or (linear) functions of the probabilities of individual and pairwise events.

For example, the well-known union upper bound and the Bonferroni inequality \cite{GalambosBook} are respectively given as follows:
\begin{equation}\label{union_upper_bound}
  \ProbUnion\le \sum_{i=1}^N P(A_i),
\end{equation}
\begin{equation}\label{bonferroni_lower_bound}
  \ProbUnion\ge \sum_{i=1}^N P(A_i)-\sum_{i<j} P(A_i\cap A_j).
\end{equation}
We note that the union upper bound (\ref{union_upper_bound}) is established in terms of only $\sum_{i=1}^N P(A_i)$ so that each of the individual event probability $P(A_i)$ is actually not needed. However, the Bonferroni lower bound (\ref{bonferroni_lower_bound}) is established using two terms, $\sum_{i=1}^N P(A_i)$ and $\sum_{i<j} P(A_i\cap A_j)$. Therefore, the union upper bound (\ref{union_upper_bound}) and the Bonferroni inequality (\ref{bonferroni_lower_bound}) are established based on different partial information on the event probabilities.

In order to distinguish the use of different partial information, we assume that a vector $\theta=(\theta_1,\dots,\theta_m)\in\mathbb{R}^m$ represents partial probabilistic information about the union $\bigcup_{i=1}^N A_i$. Specifically, we assume that for a given integer $m\ge 1$, $\Theta$ denotes the range of a function of $P(A_i)$'s and $P(A_i\cap A_j)$'s, $\eta_m:[0,1]^{N+\binom{N}{2}}\rightarrow\mathbb{R}^m$. Then $\theta$ equals to the value of the function $\eta_m$ for given $A_1,\dots,A_N$. For example,
\begin{equation}\label{}
  \theta=\left(P(A_1),P(A_2),\dots,P(A_N)\right),
\end{equation}
or
\begin{equation}\label{}
  \theta=\left(\sum_{i=1}^N P(A_i),\sum_{i<j} P(A_i\cap A_j)\right).
\end{equation}
Then, we can define a lower bound (and similarly an upper bound) on $\ProbUnion$ that is established using the partial information represented by $\theta$ as follows.
\begin{definition}
A lower bound of $\ProbUnion$ is a function of $\theta$, $\ell(\theta)$, such that
\begin{equation}\label{}
  \ProbUnion\ge\ell(\theta),
\end{equation}
for any set of events $\{A_i\}$ that the value of $\eta_m$ for given $\{A_i\}$ equals to $\theta$.
\end{definition}

Note that, for given $\theta$, such as $\theta=\left(P(A_1),\dots, P(A_N)\right)$, there are multiple functions of $\theta$ that are lower bounds, for example,
\begin{equation}\label{}
  \begin{split}
  \ProbUnion&\ge\theta_1=P(A_1),\\
  \ProbUnion&\ge\frac{\sum_i\theta_i}{N}=\frac{\sum_i P(A_i)}{N},\\
  \ProbUnion&\ge\max_i\theta_i=\max_i P(A_i).
  \end{split}
\end{equation}
Therefore, we need to define an \emph{optimal} lower bound in a general class of lower bounds that are functions of $\theta$.

Let $\mathscr{L}_{\Theta}$ denote the set of all lower bounds on $P\left(\bigcup_{i=1}^N A_i\right)$ that are functions of only $\theta$.

\begin{definition}
We say that a lower bound $\ell ^{\star}\in\mathscr{L}_{\Theta}$ is
\textit{optimal in $\mathscr{L}_{\Theta}$} if $\ell ^{\star}(\theta)\ge\ell (\theta )$
for all $\theta\in\Theta$ and $\ell\in\mathscr{L}_{\Theta}$.
\end{definition}

\begin{definition}
We say that a lower bound $\ell\in\mathscr{L}_{\Theta}$ is
\textit{achievable} if for every $\theta\in\Theta$,
\begin{equation}\label{def_achievable}
 \inf_{A_1,\ldots ,A_N}P\left(\bigcup_{i=1}^N A_i\right)=\ell (\theta ),
\end{equation}
where the infimum ranges over all collections $\{A_1,\ldots ,A_N\}$,
$A_i\in\mathscr{F}$, such that $\{A_1,\ldots ,A_N\}$ is represented by
$\theta$.
\end{definition}

For bounds in $\mathscr{L}_{\Theta}$, the following lemma shows that achievability is equivalent to optimality.
\begin{lemma}\label{optimality_lemma}
A lower bound $\ell ^{\star}\in\mathscr{L}_{\Theta}$ is optimal
in $\mathscr{L}_{\Theta}$ if and only if it is achievable.
\end{lemma}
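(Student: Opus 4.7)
The plan is to prove both directions by exploiting the fact that the pointwise infimum
\[
\tilde{\ell}(\theta) \defas \inf_{A_1,\ldots,A_N} P\of{\bigcup_{i=1}^N A_i},
\]
where the infimum ranges over all collections $\{A_1,\ldots,A_N\}$ represented by $\theta$, is itself a member of $\mathscr{L}_{\Theta}$. Indeed, $\tilde{\ell}$ depends only on $\theta$ by construction, and by the definition of infimum every collection represented by $\theta$ satisfies $P(\bigcup_i A_i) \ge \tilde{\ell}(\theta)$, so $\tilde{\ell}$ is a lower bound in $\mathscr{L}_{\Theta}$. Moreover, $\ell \in \mathscr{L}_{\Theta}$ is achievable precisely when $\ell(\theta) = \tilde{\ell}(\theta)$ for all $\theta \in \Theta$.

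For the ``if'' direction, I would assume $\ell^{\star}$ is achievable, i.e.\ $\ell^{\star}(\theta) = \tilde{\ell}(\theta)$ on $\Theta$. Given any other $\ell \in \mathscr{L}_{\Theta}$ and any $\theta \in \Theta$, the lower bound property yields $P(\bigcup_i A_i) \ge \ell(\theta)$ for every collection $\{A_i\}$ represented by $\theta$; taking the infimum over such collections gives $\tilde{\ell}(\theta) \ge \ell(\theta)$, hence $\ell^{\star}(\theta) \ge \ell(\theta)$. Since $\theta$ and $\ell$ were arbitrary, $\ell^{\star}$ is optimal.

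For the ``only if'' direction, I would assume $\ell^{\star}$ is optimal and compare it to $\tilde{\ell}$. Applying optimality to the admissible lower bound $\tilde{\ell} \in \mathscr{L}_{\Theta}$ gives $\ell^{\star}(\theta) \ge \tilde{\ell}(\theta)$ for all $\theta \in \Theta$. Conversely, since $\ell^{\star}$ is itself a lower bound, $P(\bigcup_i A_i) \ge \ell^{\star}(\theta)$ for every $\{A_i\}$ represented by $\theta$, and taking the infimum over such collections yields $\tilde{\ell}(\theta) \ge \ell^{\star}(\theta)$. The two inequalities force $\ell^{\star}(\theta) = \tilde{\ell}(\theta)$, which is exactly achievability.

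The argument is essentially a tautology once one recognizes $\tilde{\ell}$ as a legitimate element of $\mathscr{L}_{\Theta}$; the only subtle point (and the main thing to be careful about) is verifying that the pointwise infimum over feasible configurations genuinely defines a function of $\theta$ alone and does not collapse to $-\infty$ or otherwise fail to be a lower bound in the sense of the definition. Since $P(\bigcup_i A_i) \in [0,1]$ the infimum exists as a real number in $[0,1]$, so no pathology arises and both implications go through cleanly.
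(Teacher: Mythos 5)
Your proof is correct, and it takes a cleaner structural route than the paper for one of the two directions. The paper's ``if'' direction is essentially what you do (its $\epsilon$-argument is just an unpacked version of your inequality $\tilde{\ell}(\theta)\ge\ell(\theta)$ obtained by taking the infimum over collections represented by $\theta$). Where you genuinely diverge is the ``only if'' direction: the paper argues by contrapositive, assuming non-achievability at some $\theta'$ and hand-constructing a two-valued competitor (equal to a constant $c$ strictly between $\ell^\star(\theta')$ and the infimum at $\theta'$, and $0$ elsewhere) to contradict optimality, whereas you simply observe that the infimum function $\tilde{\ell}$ is itself a member of $\mathscr{L}_{\Theta}$ and apply optimality to it, getting $\ell^\star\ge\tilde{\ell}\ge\ell^\star$ directly. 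Your approach buys more: it exhibits $\tilde{\ell}$ as the maximum of $\mathscr{L}_{\Theta}$, so it shows as a by-product that an optimal (and achievable) lower bound always exists, a fact the paper's proof does not establish; it also avoids both the $\epsilon$-bookkeeping and the ad hoc construction. The paper's construction, on the other hand, is slightly more self-contained in that it never needs to verify that the infimum defines a legitimate element of $\mathscr{L}_{\Theta}$ --- the one point you rightly flag as the place requiring care (nonemptiness of the set of representing collections for each $\theta\in\Theta$, and boundedness of $P$ in $[0,1]$), and which you dispose of correctly.
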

\begin{proof}
Suppose that $\ell ^{\star}$ is achievable. Let $\theta\in\Theta$
and $\epsilon >0$ be given, and let $\ell$ be any lower bound in
$\mathscr{L}_{\Theta}$. By achievability there exist sets $A_1,\ldots ,A_N$
in $\mathscr{F}$ represented by $\theta$ such that
\[
\ell ^{\star}(\theta )>P\left(\bigcup_{i=1}^N A_i\right)-\epsilon \ge\ell (\theta )-\epsilon .
\]
Since this holds for any $\epsilon$ we have
$\ell ^{\star}(\theta )\ge\ell (\theta )$. We prove the converse by the
contrapositive. Suppose that $\ell ^{\star}$ is not achievable. Then there
exists $\theta '\in\Theta$ such that
\[
\inf_{A_1,\ldots ,A_N}P\left(\bigcup_{i=1}^N A_i\right)>\ell ^{\star}(\theta '),
\]
where the infimum ranges over all collections $\{A_1,\ldots ,A_N\}$,
$A_i\in\mathscr{F}$, such that $\{A_1,\ldots ,A_N\}$ is represented by
$\theta '$. Define $\ell$ by
\[
\ell (\theta ) =\left\{\begin{array}{cl}
c & \mbox{if $\theta =\theta '$} \\
0 & \mbox{if $\theta\ne\theta '$,}
\end{array}\right.
\]
where $c$ satisfies
\[
\inf_{A_1,\ldots ,A_N}P\left(\bigcup_{i=1}^N A_i\right)>c>\ell ^{\star}(\theta ').
\]
Then $\ell\in\mathscr{L}_{\Theta}$ and is larger than $\ell ^{\star}$ at
$\theta '$. Hence, $\ell ^{\star}$ is not optimal.
\end{proof}

Using Lemma \ref{optimality_lemma}, we can therefore prove that a lower bound $\ell(\theta)$ is optimal if for any value of $\theta\in\Theta$, one can construct a collection of events $\{A_i^*\}$ that is represented by $\theta$ and $P\left(\bigcup_{i=1}^N A_i^*\right)=\ell(\theta)$. The optimal upper bound can also be defined similarly (using a supremum in (\ref{def_achievable})) and proved by achievability.
For example, one can easily verify the following by a construction proof of achievability.
\begin{itemize}
\item $\ProbUnion\ge\frac{\sum_i P(A_i)}{N}$ is the optimal lower bound in the class for $\theta=\left(\sum_i P(A_i)\right)$.
\item $\ProbUnion\ge\max_i P(A_i)$ is the optimal lower bound in the class for $\theta=\left(P(A_1),\dots,P(A_N)\right)$.
\item  $\ProbUnion\le\min\{\sum_i P(A_i),1\}$ is the optimal upper bound in the classes for both $\theta=\left(\sum_i P(A_i)\right)$ and $\theta=\left(P(A_1),\dots,P(A_N)\right)$.
\end{itemize}

Furthermore, we can prove that a lower bound is not optimal by showing it is not achievable. For example, in order to show that the Bonferroni inequality (\ref{bonferroni_lower_bound}) is not an optimal lower bound in the class of lower bounds that are functions of $\theta=\left(\sum_i P(A_i),\sum_{i<j} P(A_i\cap A_j)\right)$, we only need to show it is not achievable. Note that for $N>3$, the lower bound (\ref{bonferroni_lower_bound}) can have negative values. However, according to the definition of achievability, the LHS of (\ref{def_achievable}) can never be negative, which means the lower bound (\ref{bonferroni_lower_bound}) cannot be achievable. Therefore, the Bonferroni inequality (\ref{bonferroni_lower_bound}) is not optimal.


Throughout the survey, we mainly focus on lower bounds using different partial probabilistic information. Upper bounds are presented as remarks.

\allowdisplaybreaks

\section{Review of Existing Bounds}\label{ch:Background}
We start from the class of lower bounds in terms of $\sum_i P(A_i)$ and $\sum_{i<j} P(A_i\cap A_j)$, for which the Dawson-Sankoff (DS) lower bound \cite{Dawson1967} is known as optimal. Then we introduce some lower bounds in terms of $\{P(A_i)\}$ and $\{\sum_{j} P(A_i\cap A_j)\}$, including the D. de Caen (DC) bound \cite{DeCaen1997} and the Kuai-Alajaji-Takahara (KAT) bound \cite{Kuai2000}. Next, a review of some lower bounds in terms of $\{P(A_i)\}$ and $\{P(A_i\cap A_j)\}$ is given, including the algorithmic stepwise lower bound \cite{Kuai2000a} and the Gallot-Kounias (GK) bound \cite{Gallot1966,Kounias1968,Feng2010}. Finally, some existing upper bounds are reviewed, including the Hunter upper bound and the algorithmic greedy upper bound \cite{Kuai2000a}.

We first define the degree of an atom (or outcome in finite probability space) $\omega\in\mathscr{F}$ as follows.
\begin{definition}
For each atom $\omega\in\mathscr{F}$, let the degree of $\omega$, denoted by $\deg(\omega)$, be the number of $A_i$'s that contain $\omega$.
\end{definition}
Therefore, the degree of any atom in $\bigcup_i A_i$ equals to an integer in $\{1,\dots,N\}$.
\subsection{Lower Bounds Using ${\sum_i P(A_i)}$ and ${\sum_{i<j}P(A_i\cap A_j)}$}
Considering $\theta=\left(\sum_i P(A_i),\sum_{i<j}P(A_i\cap A_j)\right)$, we note that the Bonferroni inequality (\ref{bonferroni_lower_bound}) is a lower bound in this class. However, we have shown that (\ref{bonferroni_lower_bound}) is not optimal, which means there exists another function of only $\sum_i P(A_i)$ and $\sum_{i<j}P(A_i\cap A_j)$ that is a lower bound of $\ProbUnion$ and always sharper than the Bonferroni inequality.

Defining
\begin{equation}\label{def_a_k}
 a(k):=P\left(\left\{\omega\subseteq\bigcup_i A_i, \deg(\omega)=k\right\}\right),
\end{equation}
one can easily verify the following identities:
\begin{equation}\label{}
  \ProbUnion=\sum_{k=1}^N a(k),
\end{equation}
\begin{equation}\label{}
  \sum_{i=1}^N P\left(A_i\right)=\sum_{k=1}^N k a(k),
\end{equation}
\begin{equation}\label{}
  \sum_{i<j} P(A_i\cap A_j)=\sum_{k=2}^N \binom{k}{2}a(k),
\end{equation}
\begin{equation}\label{}
\begin{split}
  \sum_{i,j}P\left(A_i\cap A_j\right)&=2\sum_{i<j} P(A_i\cap A_j)+\sum_{i=1}^N P(A_i)\\
  &=2\sum_{k=2}^N\frac{k(k-1)}{2}a(k)+\sum_{k=1}^N k a(k)\\
  &=\sum_{k=1}^N k^2 a(k).
\end{split}
\end{equation}

Note that using the above equalities, one can derive a lower bound simply via the Cauchy-Schwarz inequality:
\begin{equation}\label{}
  \left(\sum_k a(k)\right)\left(\sum_k k^2 a(k)\right)\ge\left(\sum_k k a(k)\right)^2,
\end{equation}
where equality holds if and only if $a(k)>0$ only for a particular $k$, i.e., all outcomes in the union has the same degree $k$. The resulting lower bound can be written as
\begin{equation}\label{new1_lower_bound}
  \ProbUnion\ge \frac{\left(\sum_i P(A_i)\right)^2}{\sum_{i,j}P(A_i\cap A_j)}.
\end{equation}
Since $\sum_{i,j}P(A_i\cap A_j)\le\sum_{i,j} P(A_i)=N\sum_i P(A_i)$,
\begin{equation}\label{}
  \frac{\left(\sum_i P(A_i)\right)^2}{\sum_{i,j}P(A_i\cap A_j)}\ge \frac{\sum_i P(A_i)}{N};
\end{equation}
hence the lower bound (\ref{new1_lower_bound}) is always sharper than $\frac{\sum_i P(A_i)}{N}$, which has been shown to be optimal in the class of $\theta=\left(\sum_i P(A_i)\right)$. This is reasonable since the lower bound (\ref{new1_lower_bound}) is established using more information than $\frac{\sum_i P(A_i)}{N}$. However, it can be readily shown that the lower bound (\ref{new1_lower_bound}) is not always sharper than the Bonferroni inequality (\ref{bonferroni_lower_bound}). Therefore, it cannot be the optimal lower bound in the class of $\theta=\left(\sum_i P(A_i),\sum_{i<j}P(A_i\cap A_j)\right)$.

\subsubsection{Dawson-Sankoff (DS) Bound}
The DS bound is known as the optimal lower bound in terms of only $\sum_i P(A_i)$ and $\sum_{i<j}P(A_i\cap A_j)$. Denoting $\theta_1:=\sum_i P(A_i)$ and $\theta_2:=\sum_{i<j}P(A_i\cap A_j)$, the DS bound \cite{Dawson1967} can be written as
\begin{equation}\label{DS_lower_bound}
  \ProbUnion\ge\frac{\kappa\theta_1^2}{(2-\kappa)\theta_1+2\theta_2}+\frac{(1-\kappa)\theta_1^2}{(1-\kappa)\theta_1+2\theta_2},
\end{equation}
where $\kappa=\frac{2\theta_2}{\theta_1}-\lfloor\frac{2\theta_2}{\theta_1}\rfloor$ and $\lfloor x\rfloor$ denotes the largest integer less than or equal to $x$.

We first show that the DS bound is the solution of a linear programming (LP) problem in the following lemma.
\begin{lemma} The DS bound is the solution of the following LP problem.
\begin{equation}\label{LP_DS}
\begin{split}
  \lDS:=&\min_{a(k)} \sum_k a(k),\\
  \st&\quad \sum_k k a(k)=\sum_i P(A_i),\\
  &\quad \sum_k k^2 a(k)=\sum_{i,j} P(A_i\cap A_j),\\
 &\quad a(k)\ge 0, \quad k=1,\dots,N.
\end{split}
\end{equation}
\end{lemma}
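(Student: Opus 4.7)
The plan is to exploit the fact that \eqref{LP_DS} is a linear program with only two equality constraints in $N$ non-negative variables, so by the fundamental theorem of linear programming any optimal basic feasible solution has at most two strictly positive components $a(k_1), a(k_2)$ with $k_1 < k_2$. The strategy is then to reduce the LP to a search over integer pairs $(k_1, k_2)$, derive a closed form for both the objective and the feasibility conditions, and show that the minimum is attained at the consecutive pair singled out by $\kappa$.

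Fixing such a candidate two-point support, I would solve the $2\times 2$ linear system
\begin{align*}
  k_1 a(k_1) + k_2 a(k_2) &= \theta_1, \\
  k_1^2 a(k_1) + k_2^2 a(k_2) &= \theta_1 + 2\theta_2
\end{align*}
to obtain explicit expressions for $a(k_1)$ and $a(k_2)$. Setting $\alpha := 2\theta_2/\theta_1$, the non-negativity conditions reduce cleanly to $k_1 - 1 \le \alpha \le k_2 - 1$, and a short algebraic simplification yields
\begin{align*}
a(k_1) + a(k_2) = \frac{\theta_1 (k_1 + k_2 - 1 - \alpha)}{k_1 k_2}.
\end{align*}

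I would then minimize this closed-form objective over admissible integer pairs. Treating $k_1, k_2$ momentarily as continuous, the partial derivatives have signs given by $\sign(1+\alpha - k_2)$ and $\sign(1+\alpha - k_1)$ respectively, so under the constraints $k_1 \le \alpha + 1 \le k_2$ the objective is weakly decreasing in $k_1$ and weakly increasing in $k_2$. Hence the minimum is attained at the consecutive pair $k_1 = m$, $k_2 = m+1$ with $m := \lfloor \alpha \rfloor + 1$; the degenerate case $\kappa = 0$ collapses to a single-point support $a(m) = \theta_1/m$, consistent with the formula. Writing $\alpha = m - 1 + \kappa$, substitution gives $a(m) = (1-\kappa)\theta_1/m$ and $a(m+1) = \kappa \theta_1/(m+1)$, and since $m\theta_1 = (1-\kappa)\theta_1 + 2\theta_2$ and $(m+1)\theta_1 = (2-\kappa)\theta_1 + 2\theta_2$, one recovers \eqref{DS_lower_bound} exactly.

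The main obstacle is the monotonicity argument in the last paragraph; in particular, ruling out non-consecutive pairs $k_2 > k_1 + 1$ requires the explicit sign analysis above, and some care is needed at boundary cases where $\alpha$ is integer-valued or lies at the extremes of $[0, N-1]$, though in these cases the two-point optimum degenerates into a single-point one without altering the final formula.
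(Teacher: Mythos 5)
Your proposal is correct and follows essentially the same route as the paper: invoke the fundamental theorem of linear programming to reduce to basic feasible solutions with at most two positive components $a(k_1),a(k_2)$, and then identify the optimal consecutive pair determined by $\lfloor 2\theta_2/\theta_1\rfloor$, which reproduces the Dawson--Sankoff formula. The only difference is that you explicitly carry out the feasibility conditions, the closed-form objective, and the monotonicity argument that the paper compresses into ``it can be easily shown,'' which is a welcome filling-in of detail rather than a new method.
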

\begin{proof}
For an LP problem, when a feasible solution exists and when the objective
function (which is linear) is bounded, the optimal value of the objective function is always attained on the boundary of the optimal level-set
and it is attained on at least one of the vertices of the polyhedron
formed by the constraints (which is the set of feasible solutions) \cite{bertsimas-LPbook}. Then, the lemma can be readily verified using this fact that one of the optimal feasible points of the LP problem (\ref{LP_DS}) is a vertex. To obtain a vertex, one need to make $N-2$ of the inequalities $a(k)\ge 0$ active, which means there are only two integers $k_1$ and $k_2$ that $1\le k_1<k_2\le N$, satisfying
\begin{equation}\label{}
\begin{split}
  \min_{k_1,k_2}\quad &a(k_1)+a(k_2),\\
  \st&\quad k_1 a(k_1)+k_2 a(k_2)=\sum_i P(A_i),\\
  &\quad k_1^2 a(k_1)+ k_2^2 a(k_2)=\sum_{i,j} P(A_i\cap A_j),\\
  &\quad a(k_1)\ge 0,\quad a(k_2)\ge 0.
\end{split}
\end{equation}
It can be easily shown that the solution of the above problem is achieved at $k_1=\lfloor\frac{\sum_{i,j} P(A_i\cap A_j)}{\sum_i P(A_i)}\rfloor$ and $k_2=k_1+1$. Thus, the solution of (\ref{LP_DS}) is the DS bound.
\end{proof}

The existing proof of the optimality of the DS bound can be seen, e.g., in \cite[p. 22]{GalambosBook}. We herein give an alternative and simpler proof by proving it is achievable.
\begin{lemma} The DS bound is optimal in the class of lower bounds in terms of $\theta=\left(\sum_i P(A_i),\sum_{i<j}P(A_i\cap A_j)\right)$.
\end{lemma}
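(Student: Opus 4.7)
The plan is to apply \cref{optimality_lemma} and reduce the statement to achievability: for every admissible $\theta=(\theta_1,\theta_2)\in\Theta$, I need to exhibit events $A_1^{\star},\dots,A_N^{\star}$ represented by $\theta$ and satisfying $P\bigl(\bigcup_i A_i^{\star}\bigr)=\lDS(\theta)$. The natural witness is furnished by the LP \eqref{LP_DS} from the previous lemma, whose optimum is attained at a two-point vertex supported on consecutive integer degrees $k_1=\lfloor 2\theta_2/\theta_1\rfloor$ and $k_2=k_1+1$, with non-negative weights $a^{\star}(k_1),a^{\star}(k_2)$ determined by the two equality constraints.

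To realize this LP solution by an actual collection of events, I would work in the finite probability space whose atoms are indexed by subsets of $\{1,\dots,N\}$ of size $k_1$ or $k_2$, assigning each size-$k_j$ subset $S\subseteq\{1,\dots,N\}$ the probability $a^{\star}(k_j)/\binom{N}{k_j}$, and then defining $A_i^{\star}$ as the set of atoms whose index $S$ contains $i$. By the symmetry of this construction, the atom indexed by an $S$ of size $k_j$ has degree exactly $k_j$, so the identities surrounding \eqref{def_a_k} yield $\sum_i P(A_i^{\star})=k_1 a^{\star}(k_1)+k_2 a^{\star}(k_2)$ and $\sum_{i<j}P(A_i^{\star}\cap A_j^{\star})=\binom{k_1}{2}a^{\star}(k_1)+\binom{k_2}{2}a^{\star}(k_2)$. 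The two LP equality constraints then recover $\theta_1$ and $\theta_2$ exactly, while $P\bigl(\bigcup_i A_i^{\star}\bigr)=a^{\star}(k_1)+a^{\star}(k_2)=\lDS(\theta)$, establishing achievability and hence optimality.

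The one point to verify is that the LP weights $a^{\star}(k_1),a^{\star}(k_2)$ are non-negative, so the construction defines a bona fide probability measure. This is a direct consequence of feasibility: since $\Theta$ is by definition the range of $\eta_m$ over genuine event collections, the LP \eqref{LP_DS} always admits at least one feasible point (the $a(k)$'s induced by any representative collection), so its optimal vertex lies in the non-negative orthant as well. I expect the main nuisance to be the algebraic bookkeeping that reconciles the LP optimum $a^{\star}(k_1)+a^{\star}(k_2)$ with the closed-form expression \eqref{DS_lower_bound} involving the fractional part $\kappa$, together with the boundary case $\kappa=0$, in which $2\theta_2/\theta_1$ is already an integer and the construction collapses to a single-degree family.
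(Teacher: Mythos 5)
Your proposal is correct and follows essentially the same route as the paper: reduce optimality to achievability via \cref{optimality_lemma} and realize the two-point LP vertex of \eqref{LP_DS} by an explicit event collection whose union probability is $a^{\star}(k_1)+a^{\star}(k_2)=\lDS$. The only difference is in the witness construction --- the paper uses just two atoms $\omega_1,\omega_2$ with the nested events $A_i^*=\{\omega_1,\omega_2\}$ for $i\le k_1$, $A_i^*=\{\omega_2\}$ for $k_1<i\le k_2$, and $A_i^*=\emptyset$ otherwise, whereas you spread the mass symmetrically over all size-$k_1$ and size-$k_2$ index subsets; both constructions give atoms of degrees $k_1,k_2$ with the required masses, so the argument goes through identically.
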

\begin{proof} We have shown that the DS bound is the solution of (\ref{LP_DS}) and can be written as $\lDS=a(k_1)+a(k_2)$, for some $a(k_1)\ge 0$, $a(k_2)\ge 0$, and $a(k)=0, k\neq k_1, k\neq k_2$. Recalling the definition of $a(k)$, one can construct two outcomes $\omega_1$ and $\omega_2$ in a finite probability space such that
\begin{equation}\label{}
  P(\omega_1)=a(k_1),\quad P(\omega_2)=a(k_2).
\end{equation}
Then consider the following construction of collection of events $\{A_i^*\}$,
\begin{equation}\label{}
\begin{split}
  A_i^*&=\{\omega_1,\omega_2\}, \quad\textrm{if}\quad i\le k_1,\quad\\
  A_i^*&=\{\omega_2\},\quad \textrm{if}\quad k_1<i\le k_2,\quad\\
  A_i^*&=\emptyset,\quad \textrm{otherwise}.
\end{split}
\end{equation}
Then we always have $\lDS=P\left(\bigcup_i A_i^*\right)$. Therefore, the DS bound is achievable, and hence optimal.
\end{proof}

Note that since the DS bound is optimal, it is always sharper than the lower bound in (\ref{new1_lower_bound}). Actually, this can be easily proved since the lower bound in (\ref{new1_lower_bound}) is a lower bound of the objective function of (\ref{LP_DS})  by Cauchy-Schwarz inequality using the two constraints of (\ref{LP_DS}).

\subsection{Lower Bounds Using ${\{P(A_i)\}}$ and ${\{\sum_{j\neq i}P(A_i\cap A_j)\}}$}
In this section, we review the lower bounds in terms of $\{P(A_i)\}$ and $\{\sum_{j\neq i}P(A_i\cap A_j)\}$, including the DC \cite{DeCaen1997} and the KAT \cite{Kuai2000} bounds.

Similar to the definition of $a(k)$, define
\begin{equation}\label{def_a_i_k}
  a_i(k):=P\left(\left\{\omega\subseteq A_i: \deg(\omega)=k\right\}\right),\quad i=1,\dots,N,\quad k=1,\dots,N.
\end{equation}
Then one can verify that $\sum_i a_i(k)=k a(k)$, i.e.,
\begin{equation}\label{relation_a_k_a_i_k}
  a(k)=\frac{\sum_i a_i(k)}{k}.
\end{equation}
For simplicity, we denote
\begin{equation}\label{}
  \alpha_i:= P(A_i),\quad\beta_i:=\sum_{j\neq i} P(A_i\cap A_j),\quad \gamma_i:=\alpha_i+\beta_i=\sum_j P(A_i\cap A_j).
\end{equation}
We examine lower bounds that are functions of $\theta=\left(\alpha_1,\dots,\alpha_N,\gamma_1,\dots,\gamma_N\right)$.

One can verify that $\ProbUnion$, $\alpha_i$ and $\gamma_i$ can all be written as linear functions of $\{a_i(k)\}$ as follows.
\begin{equation}\label{}
  \ProbUnion=\sum_k a(k)=\sum_i \sum_k \frac{a_i(k)}{k}.
\end{equation}
\begin{equation}\label{}
  \alpha_i=P(A_i)=\sum_k a_i(k),\quad \gamma_i=\sum_j P(A_i\cap A_j)=\sum_k k a_i(k).
\end{equation}

\subsubsection{D. de Caen (DC) bound}
Similar to the lower bound in (\ref{new1_lower_bound}), using the Cauchy-Schwarz inequality
\begin{equation}\label{}
  \left(\sum_k\frac{a_i(k)}{k}\right)\left(\sum_k ka_i(k)\right)\ge\left(\sum_k a_i(k)\right)^2
\end{equation}
for $i=1,\dots,N$, and summing over $i$, one can get the DC bound as follows.
\begin{equation}\label{DC_bound}
  P\left(\bigcup_i A_i\right)=\sum_i \sum_k \frac{a_i(k)}{k}\ge \sum_i \left(\frac{\alpha_i^2}{\gamma_i}\right)=\sum_i\frac{P(A_i)^2}{\sum_j P(A_i\cap A_j)}=:\lDC.
\end{equation}
It is noted by D. de Caen \cite{DeCaen1997} that the above lower bound can be (but is not always) sharper than the DS bound.

\subsubsection{The Kuai-Alajaji-Takahara (KAT) bound}
Now, we introduce the KAT bound
\begin{equation}\label{def_lKAT}
  \lKAT:=\sum_{i=1}^N\quad \left\{ \left[\frac{1}{\lfloor\frac{\gamma_i}{\alpha_i}\rfloor}-\frac{\frac{\gamma_i}{\alpha_i}-\lfloor\frac{\gamma_i}{\alpha_i}\rfloor}{(1+\lfloor\frac{\gamma_i}{\alpha_i}\rfloor)(\lfloor\frac{\gamma_i}{\alpha_i}\rfloor)}\right]\alpha_i\right\},
\end{equation}
as the solution of an LP problem, which is given in the following Lemma.
\begin{lemma}\label{lemma_KAT}
The KAT bound is the solution of the following LP problem
\begin{equation}\label{LP_KAT}
\begin{split}
    \min_{\{a_i(k),i=1,\ldots,N,k=1,\ldots,N\}}&\quad\sum_{i=1}^N\sum_{k=1}^N\frac{a_i(k)}{k}\\
    \st&\quad\sum_{k=1}^N a_i(k)=\alpha_i,\quad \sum_{k=1}^N k a_i(k)= \gamma_i,\quad i=1,\ldots,N,\\
    &\quad a_i(k)\ge 0,\quad i=1,\ldots,N,\quad k=1,\ldots,N,
\end{split}
\end{equation}
\end{lemma}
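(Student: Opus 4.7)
The plan is to mirror the LP argument that was just used for the Dawson--Sankoff bound, exploiting the fact that the constraints of \eqref{LP_KAT} couple variables only through the index $i$.

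First I would observe that \eqref{LP_KAT} decouples into $N$ independent linear programs, one per index $i$, namely
\*[
\min_{a_i(\cdot)\ge 0}\sum_{k=1}^N\frac{a_i(k)}{k}\quad\text{subject to}\quad \sum_{k=1}^N a_i(k)=\alpha_i,\ \sum_{k=1}^N k\,a_i(k)=\gamma_i.
\*]
Since the sum of minima equals the minimum of the sum when the constraints decouple, it suffices to solve each subproblem.

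Next, invoking the same vertex argument cited in the proof of the DS bound (an LP with a bounded feasible objective attains its optimum at a vertex of the feasible polyhedron), each per-$i$ subproblem has an optimal solution with at most two nonzero components, say $a_i(k_1)$ and $a_i(k_2)$ with $k_1<k_2$. For a fixed pair $(k_1,k_2)$, the two equality constraints determine
\*[
a_i(k_1)=\frac{k_2\alpha_i-\gamma_i}{k_2-k_1},\qquad a_i(k_2)=\frac{\gamma_i-k_1\alpha_i}{k_2-k_1},
\*]
which are nonnegative exactly when $k_1\le \gamma_i/\alpha_i\le k_2$. Substituting into the objective gives the clean expression
\*[
\frac{a_i(k_1)}{k_1}+\frac{a_i(k_2)}{k_2}=\alpha_i\left[\frac{1}{k_1}+\frac{1}{k_2}-\frac{\gamma_i/\alpha_i}{k_1k_2}\right].
\*]

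The remaining (and only slightly delicate) step is to show that this expression is minimized, over integer pairs with $k_1\le\gamma_i/\alpha_i\le k_2$, by the consecutive pair $k_1=\lfloor\gamma_i/\alpha_i\rfloor$ and $k_2=k_1+1$. For this I would perform a one-variable perturbation: replacing $k_1$ by a smaller $k_1'$ while holding $k_2$ fixed changes the bracket by $(1/k_1'-1/k_1)(1-(\gamma_i/\alpha_i)/k_2)$, which is nonnegative since $1/k_1'>1/k_1$ and $k_2\ge\gamma_i/\alpha_i$; so shrinking $k_1$ can only worsen the bound, forcing $k_1=\lfloor\gamma_i/\alpha_i\rfloor$ at the optimum. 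A symmetric perturbation in $k_2$ forces $k_2=\lceil\gamma_i/\alpha_i\rceil$, which equals $k_1+1$ unless $\gamma_i/\alpha_i$ is an integer (in which case both choices give the same value).

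Finally I would substitute $k_1=\lfloor\gamma_i/\alpha_i\rfloor$ and $k_2=k_1+1$ back into the closed-form expression above and verify by routine algebra that the resulting per-$i$ minimum equals the bracketed expression in \eqref{def_lKAT}; summing over $i$ then yields $\lKAT$. I expect the vertex reduction to be immediate given the precedent of the DS lemma; the main obstacle is the discrete optimization over $(k_1,k_2)$, which is handled by the perturbation inequality sketched above.
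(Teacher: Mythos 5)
Your proof is correct and follows essentially the same route as the paper: decouple \eqref{LP_KAT} into $N$ independent per-$i$ subproblems and solve each by the same vertex argument used for the DS bound's LP. The paper defers the details of this two-nonzero-component optimization to \cite{Kuai2000}, whereas you supply them explicitly (the closed form for fixed $(k_1,k_2)$ and the perturbation argument forcing $k_1=\lfloor\gamma_i/\alpha_i\rfloor$, $k_2=k_1+1$), and these computations check out.
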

\begin{proof}
One can separate each $i$ in the problem (\ref{LP_KAT}) and solve $N$ sub-optimization problems separately for each $i$:
\begin{equation}\label{LP_KAT_subproblem}
\begin{split}
    \min_{\{a_i(k),k=1,\ldots,N\}}&\quad\sum_{k=1}^N\frac{a_i(k)}{k}\\
    \st&\quad\sum_{k=1}^N a_i(k)=\alpha_i,\quad \sum_{k=1}^N k a_i(k)= \gamma_i,\\
    &\quad a_i(k)\ge 0,\quad k=1,\ldots,N.
\end{split}
\end{equation}
Each of the sub-problems can be solved using the same method as solving the LP problem (\ref{LP_DS}) for the DS bound. One can see \cite{Kuai2000} for details. An alternative proof is given in \cite{KuaiThesis} by solving the dual LP problem of (\ref{LP_KAT_subproblem}).
\end{proof}

It has been shown that the KAT bound is always sharper than both the DC bound and the DS bound \cite{Kuai2000}. Furthermore, Dembo has shown \cite{DemboUnpublished} that the KAT bound improves the DC bound by a factor of at most $\frac{9}{8}$. In the following lemma, we give alternative and simpler proofs of the above results.

\begin{lemma}
Comparing with the DC and DS bounds, the KAT bound satisfies
\begin{equation}\label{}
  \max\{\lDC,\lDS\}\le\lKAT\le\frac{9}{8}\lDC.
\end{equation}
\end{lemma}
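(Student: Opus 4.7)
The plan is to prove the two inequalities $\max\{\lDC,\lDS\} \le \lKAT$ and $\lKAT \le \frac{9}{8}\lDC$ separately, exploiting the LP characterization of $\lKAT$ from Lemma~\ref{lemma_KAT}.

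For the comparison $\lDS \le \lKAT$, I would observe that any feasible $\{a_i(k)\}$ for the LP (\ref{LP_KAT}) induces a feasible point $a(k):=\frac{1}{k}\sum_i a_i(k)$ for the LP (\ref{LP_DS}). Indeed, summing $\sum_k a_i(k)=\alpha_i$ over $i$ recovers $\sum_k k a(k)=\sum_i \alpha_i=\sum_i P(A_i)$, and summing $\sum_k k a_i(k)=\gamma_i$ over $i$ recovers $\sum_k k^2 a(k)=\sum_i \gamma_i = \sum_{i,j}P(A_i\cap A_j)$. Moreover, the two objectives agree: $\sum_k a(k)=\sum_i\sum_k a_i(k)/k$. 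Hence the KAT feasible set maps into the DS feasible set with preserved objective, and the minimum can only increase, giving $\lDS\le \lKAT$. For $\lDC \le \lKAT$, I would apply Cauchy--Schwarz inside each subproblem (\ref{LP_KAT_subproblem}): for any feasible $\{a_i(k)\}_k$,
\[
\sum_k \frac{a_i(k)}{k}\cdot \sum_k k a_i(k) \ge \Big(\sum_k a_i(k)\Big)^2 = \alpha_i^2,
\]
so $\sum_k a_i(k)/k \ge \alpha_i^2/\gamma_i$. Summing over $i$ shows $\lDC$ lower bounds the KAT objective at every feasible point, and therefore $\lDC \le \lKAT$.

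For the reverse comparison $\lKAT \le \tfrac{9}{8}\lDC$, I would work term-by-term. Writing $r_i:=\gamma_i/\alpha_i$, $m_i:=\lfloor r_i\rfloor$, and $\kappa_i:=r_i-m_i\in[0,1)$ (note $r_i\ge 1$ since $\gamma_i\ge P(A_i\cap A_i)=\alpha_i$, so $m_i\ge 1$), the $i$-th term of $\lKAT$ from (\ref{def_lKAT}) simplifies to
\[
\oF{\frac{1}{m_i}-\frac{\kappa_i}{m_i(m_i+1)}}\alpha_i = \frac{m_i+1-\kappa_i}{m_i(m_i+1)}\alpha_i,
\]
while the $i$-th DC term is $\alpha_i^2/\gamma_i=\alpha_i/(m_i+\kappa_i)$. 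Their ratio equals
\[
\frac{(m_i+1-\kappa_i)(m_i+\kappa_i)}{m_i(m_i+1)} = 1+\frac{\kappa_i(1-\kappa_i)}{m_i(m_i+1)}.
\]
The correction $\kappa_i(1-\kappa_i)$ is maximized at $\kappa_i=\tfrac12$ with value $\tfrac14$, and $m_i(m_i+1)$ is minimized at $m_i=1$ with value $2$, so the ratio is at most $1+\tfrac{1/4}{2}=\tfrac{9}{8}$. Summing over $i$ yields $\lKAT\le\tfrac{9}{8}\lDC$.

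The step I expect to be least obvious is merely the algebraic identity $(m+1-\kappa)(m+\kappa)=m(m+1)+\kappa(1-\kappa)$, which converts the ratio into the transparent form whose maximum over the admissible parameters $(m,\kappa)\in\Nats\times[0,1)$ is immediate; everything else is a direct consequence of the LP characterization and Cauchy--Schwarz, without needing to reprove the optimality of $\lDS$ or the closed forms already established.
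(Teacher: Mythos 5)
Your proposal is correct and follows essentially the same route as the paper: $\lDS\le\lKAT$ via mapping KAT-feasible points $\{a_i(k)\}$ to DS-feasible points $a(k)=\frac{1}{k}\sum_i a_i(k)$ with equal objective, $\lDC\le\lKAT$ via Cauchy--Schwarz on each subproblem, and the $\tfrac{9}{8}$ factor from the ratio bound $1+\frac{\kappa(1-\kappa)}{m(m+1)}\le\tfrac98$. The only cosmetic difference is that you derive the last inequality directly from the closed form (\ref{def_lKAT}) with $(m_i,\kappa_i)$, whereas the paper writes the LP vertex solution in terms of $x=a_i(k)$, $y=a_i(k+1)$; setting $\kappa_i=y/(x+y)$ shows the two computations are identical.
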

\begin{proof}
First, substituting (\ref{relation_a_k_a_i_k}) in (\ref{LP_DS}), one can get that the DS bound is the solution of the following LP problem of $\{a_i(k)\}$
\begin{equation}\label{LP_DS_a_i_k}
\begin{split}
    \lDS=&\min_{\{a_i(k),i=1,\ldots,N,k=1,\ldots,N\}}\quad\sum_{i=1}^N\sum_{k=1}^N\frac{a_i(k)}{k}\\
    \st&\quad\sum_{i=1}^N\sum_{k=1}^N a_i(k)=\sum_i\alpha_i,\quad \sum_{i=1}^N\sum_{k=1}^N k a_i(k)= \sum_i\gamma_i,\\
    &\quad \sum_i a_i(k)\ge 0,\quad k=1,\ldots,N.
\end{split}
\end{equation}
Since every feasible point of (\ref{LP_KAT}) is also a feasible point of (\ref{LP_DS_a_i_k}). The LP problem (\ref{LP_DS_a_i_k}) is a relaxed problem of (\ref{LP_KAT}). Therefore, $\lKAT\ge \lDS$.

Next, it is easy to show that $\lKAT\ge \lDC$ since based on the constraints of (\ref{LP_KAT}), one can get the DC bound as a lower bound of the objective function of (\ref{LP_KAT}) using the Cauchy-Schwarz inequality. Therefore, $\lKAT$ is lower bounded by $\lDC$.

Finally, we prove that $\lKAT\le\frac{9}{8}\lDC$.
Note that the DC bound (\ref{DC_bound}) is given by $\lDC=\sum_i\frac{\alpha_i^2}{\gamma_i}$ and that the solution of (\ref{LP_KAT_subproblem}) can be written as $\frac{a_i(k_1)}{k_1}+\frac{a_i(k_2)}{k_2}$ where $k_2=k_1+1$. It then suffices to prove for any $i=1,\dots,N$ and integer $k=1,\dots,N-1$
\begin{equation}\label{}
  \frac{a_i(k)}{k}+\frac{a_i(k+1)}{k+1}\le\frac{9}{8}\frac{\alpha_i^2}{\gamma_i},
\end{equation}
where $a_i(k)+a_i(k+1)=\alpha_i$ and $k a_i(k)+(k+1)a_i(k+1)=\gamma_i$.

Denoting $x:=a_i(k)$ and $y:=a_i(k+1)$, one can get
\begin{equation}\label{}
\begin{split}
  &\left(\frac{a_i(k)}{k}+\frac{a_i(k+1)}{k+1}\right)/\frac{\alpha_i^2}{\gamma_i}=\frac{\left(\frac{x}{k}+\frac{y}{k+1}\right)\left[kx+(k+1)y\right]}{\alpha_i^2}\\
  &=\frac{x^2+\frac{k}{k+1}xy+\frac{k+1}{k}xy+y^2}{(x+y)^2}=1+\frac{1}{k(k+1)}\frac{xy}{(x+y)^2}\\
  &\le 1+\frac{1}{4}\frac{1}{k(k+1)}\le 1+\frac{1}{8}=\frac{9}{8}.
\end{split}
\end{equation}
The first equality holds when $x=y=\frac{\alpha_i}{2}$ and the second equality holds when $k=1$. Therefore, the inequality $\lKAT\le\frac{9}{8}\lDC$ can be active.
\end{proof}

\begin{remark}Finally, we can derive an upper bound for $\ProbUnion$ using $\{P(A_i)\}$ and $\{\sum_j P(A_i\cap A_j)\}$ by maximizing the LP problem for the KAT bound.

The following LP problem
\begin{equation}\label{LP_KAT_UPPER}
\begin{split}
    \max_{\{a_i(k),i=1,\ldots,N,k=1,\ldots,N\}}&\quad\sum_{i=1}^N\sum_{k=1}^N\frac{a_i(k)}{k}\\
    \st&\quad\sum_{k=1}^N a_i(k)=\alpha_i,\quad \sum_{k=1}^N k a_i(k)= \gamma_i,\quad i=1,\ldots,N,\\
    &\quad a_i(k)\ge 0,\quad i=1,\ldots,N,\quad k=1,\ldots,N,
\end{split}
\end{equation}
gives the upper bound
\begin{equation}\label{upper1}
\begin{split}
    P\left(\bigcup_i A_i\right)&\le \sum_i \alpha_i-\frac{1}{N}\sum_i\beta_i\\
    &=\sum_i P(A_i)-\frac{1}{N}\sum_{j\neq i} P(A_i\cap A_j)=:\uNEWI.
\end{split}
\end{equation}
\end{remark}
\subsection{Lower Bounds Using ${\{P(A_i)\}}$ and ${\{P(A_i\cap A_j)\}}$}
Lower and upper bounds on $P\left(\bigcup_{i=1}^N A_i\right)$ in terms of the individual event probabilities $P(A_i)$'s and the pairwise event probabilities $P(A_i\cap A_j)$'s can be seen as special cases of the Boolean probability bounding problem \cite{Boros2014,Vizvari2004}, which can be solved numerically via a linear programming (LP) problem involving $2^N$ variables. Unfortunately, the number of variables for Boolean probability bounding problems increases exponentially with the number of events, $N$, which makes finding the solution impractical. Therefore, some suboptimal numerical bounds are proposed \cite{Boros2014,Vizvari2004,Prekopa2005,GalambosBook} in order to reduce the complexity of the LP problem, for example, by using the dual basic feasible solutions.

On the other hand, analytical/algorithmic bounds are particularly important. One can apply an existing bound using $\{P(A_i), i\in\mathcal{I}\}$ and $\{\sum_{j\in\mathcal{I}} P(A_i\cap A_j)\}$ as a base bound, and then optimize the bound by choosing the optimal subset $\mathcal{I}$ of $\{1,\dots,N\}$ algorithmically. Note that the bound by optimization via a subset exploits the full information of $\{P(A_i)\}$ and $\{P(A_i\cap A_j)\}$. Examples of bounds in this class includes the stepwise algorithmic implementation of the Kounias lower bound \cite{Kuai2000a}, the greedy algorithmic implementation of the Hunter upper bound \cite{Kuai2000a}. Other analytical bounds, like the KAT bound, are also investigated in other works (e.g., see \cite{Chen2006,Hoppe2006,Hoppe2009,Kuai2000a,Behnamfar2005,Behnamfar2007,Mao2013}).

The other class of bounds is established by $\{P(A_i)\}$ and $\{\sum_j c_j P(A_i\cap A_j)\}$, where $\{c_j\}$ can be arbitrarily chosen from a continuous set for $\bc=(c_1,\dots,c_N)^T$ or computed using $\{P(A_i)\}$ and $\{P(A_i\cap A_j)\}$. Then the resulting bound also exploits the full information of  $\{P(A_i)\}$ and $\{P(A_i\cap A_j)\}$. Typical example of the bounds in this class is the Gallot-Kounias (GK) bound \cite{Gallot1966,Kounias1968} (see also \cite{Feng2010,Mao2013}).

\subsubsection{Kounias Lower Bound and Algorithmic Implementation}
The Kounias lower bound, which is a Bonferroni-type bound, can be written as
\begin{equation}\label{kounias_lower_bound}
  \ProbUnion\ge \max_{\mathcal{I}}\left\{\sum_{i\in\mathcal{I}}P(A_i)-\sum_{i,j\in\mathcal{I}, i<j}P(A_i\cap A_j)\right\},
\end{equation}
where $\mathcal{I}$ is a subset of the set of indices $\{1,\dots,N\}$. However, the computational complexity of the Kounias lower bound is exponential since there are exponential number of subsets of $\{1,\dots,N\}$.

In order to reduce the computational complexity, an algorithmic algorithm is proposed in \cite{Kuai2000a} using a stepwise algorithm to find a sub-optimal index set that maximizes the RHS of (\ref{kounias_lower_bound}). We will refer to this algorithmic implementation of Kounias lower bound as the stepwise lower bound.

\subsubsection{Gallot-Kounias (GK) Bound}\label{subsection_GK_bound}
Let $\balpha=\left(P(A_1),\cdots,P(A_N)\right)^T\in\mathbb{R}^{N\times 1}$ and
\begin{equation}\label{}
    \bSigma=\left(
                                                                       \begin{array}{cccc}
                                                                         P(A_1\cap A_1) & P(A_1\cap A_2) & \dots & P(A_1\cap A_N) \\
                                                                         P(A_2\cap A_1) & P(A_2\cap A_2) & \dots & P(A_2\cap A_N) \\
                                                                         \vdots & \vdots & \ldots & \vdots \\
                                                                         P(A_N\cap A_1) & P(A_N\cap A_2) & \dots & P(A_N\cap A_N) \\
                                                                       \end{array}
                                                                     \right)\in\mathbb{R}^{N\times N},
\end{equation}
the GK bound \cite{Gallot1966,Kounias1968} is given as
\begin{equation}\label{}
    P\left(\bigcup_{i-1}^N A_i\right)\ge \bc^T\bSigma\bc,
\end{equation}
where $\bSigma\bc=\balpha$. Kounias has shown in \cite[Lemma 1.1]{Kounias1968} that the vector $\balpha$ is in the range of $\bSigma$, i.e., $\balpha$ is orthogonal to the null space of $\bSigma$. As a result, if $\bSigma$ is singular, one can choose subsets of $\{A_1,\dots,A_N\}$ to compute the corresponding GK bound, which results in the same bound if the rank of the corresponding $\bSigma$ is the same.

Therefore, without loss of generality (WLOG), we assume herein $\bSigma$ is non-singular, then the solution of $\bSigma\bc=\balpha$ is unique
\begin{equation}\label{}
    \tilde{\bc}=\bSigma^{-1}\balpha.
\end{equation}
and the GK bound can be written as
\begin{equation}\label{def_lGK}
     P\left(\bigcup_{i-1}^N A_i\right)\ge \tilde{\bc}^T\bSigma\tilde{\bc}=\balpha^T\left(\bSigma^T\right)^{-1}\balpha=\balpha^T\bSigma^{-1}\balpha=:\lGK,
\end{equation}
where $\left(\bSigma^T\right)^{-1}=\bSigma^{-1}$ as $\bSigma$ is symmetric.
Furthermore, the GK bound was recently revisited by \cite{Feng2010}. The authors in \cite{Feng2010} have shown that the GK bound can be reformulated as
\begin{equation}\label{}
  \lGK=\max_{\bc\in\mathbb{R}^N}\frac{\left[\sum_{i} c_i P(A_i)\right]^2}{\sum_{i}\sum_{k}c_ic_kP(A_i\cap A_k)}.
\end{equation}
\subsection{Upper Bounds}
There are only a few analytical/algorithmic upper bounds in the literature. In the following, we introduce the Hunter bound and its algorithmic implementation by a greedy algorithm.
\subsubsection{Hunter Upper Bound and Algorithmic Implementation}
The Hunter upper bound, which is a Bonferroni-type bound, can be written as
\begin{equation}\label{}
  \ProbUnion\le\sum_{i=1}^N P(A_i)-\max_{T_0\in\mathcal{T}}\sum_{(i,j)\in T_0} P(A_i\cap A_j),
\end{equation}
where $\mathcal{T}$ is the set of all trees spanning the $N$ indices, i.e., the trees that include all indices as nodes.

However, the computational complexity of finding the optimal spanning tree is exponential via an exhaustive search. In order to reduce the complexity, one algorithmic algorithm is proposed in \cite{Kuai2000a} using Kruskal's greedy algorithm for finding a sub-optimal spanning tree for a weighted graph. We will refer to this algorithmic implementation of the Hunter upper bound as the greedy upper bound, $\uGREEDY$.





\allowdisplaybreaks

\section{Observations on the GK Bound}
Finally, we conclude this survey with two observations on the GK bound.
\subsection{Applying the GK bound to subsets of events}
We note that many existing lower bounds, which do not fully explore available information, can be further improved algorithmically via optimization over subsets, as in \cite{Behnamfar2007,Hoppe2006}. However, in this section, we prove that the GK bound cannot be improved by applying it to subsets of $\{A_1,\dots,A_N\}$.

\begin{lemma} \label{prop1_commentsGK}
	For any given $M\ge N$, the GK bound is the solution of the following problem:
	\begin{equation}\label{temp_problem_commentsGK}
	\begin{split}
	&\min_{\{\bx\in\mathbb{R}^{M\times 1}, \bA\in\mathbb{R}^{N\times M}\}} \|\bx\|^2\\
	&\st\quad \bA\bx=\balpha,\quad \bA\bA^T=\bSigma\in\mathbb{R}^{N\times N}.
	\end{split}
	\end{equation}
\end{lemma}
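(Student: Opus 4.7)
The plan is to show that the optimal value of the program equals $\lGK$ by establishing matching lower and upper bounds on the objective. The key structural observation is that the constraint $\bA\bA^T = \bSigma$ pins down the Gram matrix of the rows of $\bA$, and therefore the minimum of $\|\bx\|^2$ subject to $\bA\bx = \balpha$ turns out to be independent of the particular choice of $\bA$.

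First I would prove the lower bound. Fix any feasible pair $(\bA,\bx)$. Since $\bSigma$ is non-singular by the standing assumption of Section~\ref{subsection_GK_bound}, the matrix
\[
\bP \defas \bA^T(\bA\bA^T)^{-1}\bA = \bA^T \bSigma^{-1}\bA \in \mathbb{R}^{M\times M}
\]
is well defined. A direct computation gives $\bP^T = \bP$ and $\bP^2 = \bP$, so $\bP$ is the orthogonal projection onto the row space of $\bA$; in particular $\bI - \bP \succeq \mathbf{0}$. Using $\bA\bx = \balpha$ then yields
\[
\|\bx\|^2 \;\ge\; \bx^T \bP \bx \;=\; (\bA\bx)^T \bSigma^{-1}(\bA\bx) \;=\; \balpha^T \bSigma^{-1}\balpha \;=\; \lGK,
\]
uniformly over all feasible $(\bA,\bx)$, so the infimum is at least $\lGK$.

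Next I would verify achievability for every $M\ge N$ by exhibiting an explicit minimizer. Since $\bSigma$ is symmetric positive definite, it admits a symmetric positive-definite square root $\bSigma^{1/2}$. Define
\[
\bA \defas \begin{pmatrix} \bSigma^{1/2} & \mathbf{0}_{N\times (M-N)} \end{pmatrix}\in\mathbb{R}^{N\times M},\qquad
\bx \defas \begin{pmatrix} \bSigma^{-1/2}\balpha \\ \mathbf{0}_{(M-N)\times 1} \end{pmatrix}\in\mathbb{R}^{M}.
\]
Then $\bA\bA^T = \bSigma$, $\bA\bx = \balpha$, and $\|\bx\|^2 = \balpha^T \bSigma^{-1}\balpha = \lGK$, so this pair is feasible and attains the lower bound.

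There is no serious obstacle: once the problem is split into an inner minimization over $\bx$ with $\bA$ held fixed, the minimum-norm solution formula shows that the inner optimal value $\balpha^T(\bA\bA^T)^{-1}\balpha$ depends on $\bA$ only through $\bA\bA^T = \bSigma$, and the outer minimization over $\bA$ becomes trivial. The only item requiring mild care is producing a feasible $\bA$ of shape $N\times M$ for each $M\ge N$, which the square-root-plus-zero-padding construction handles immediately; any other square-root factor (Cholesky, eigen-decomposition based, etc.) would work just as well.
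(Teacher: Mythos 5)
Your proof is correct and follows essentially the same route as the paper: the projection inequality $\|\bx\|^2\ge \bx^T\bA^T\bSigma^{-1}\bA\bx$ is just a compact repackaging of the paper's decomposition $\bx=\bA^T\bk_1+\bA_{\perp}^T\bk_2$ with the $\bk_2$-component discarded, and your zero-padded symmetric square root $\left(\bSigma^{1/2},\bzero\right)$ plays the same role as the paper's Cholesky factor $\left(\bL,\bzero\right)\bQ$ in exhibiting a feasible, optimal pair. Your write-up is, if anything, slightly more careful in separating the uniform lower bound from achievability, but there is no substantive difference in method.
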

\begin{proof}
	We can always write $\bx$ as
	\begin{equation}\label{}
	\bx=\bA^T\bk_1+\bA_{\perp}^T\bk_2,
	\end{equation}
	where $\bk_1\in\mathbb{R}^{N\times 1}$, $\bk_2\in\mathbb{R}^{(M-N)\times 1}$, and $\bA_{\perp}\in\mathbb{R}^{(M-N)\times M}$ satisfies $\bA\bA_{\perp}^T=\bzero_{N\times(M-N)}$.
	Particularly, let $\bL\bL^T$ be the Cholesky decomposition of $\bSigma$, then $\bA=\left(\bL,\bzero_{N\times(M-N)}\right)\bQ$ where $\bQ$ is any orthogonal matrix is the solution of (\ref{temp_problem_commentsGK}).
	
	Then
	\begin{equation}\label{}
	\|\bx\|^2=\bk_1^T\bA\bA^T\bk_1+\bk_2^T\bA_{\perp}\bA_{\perp}^T\bk_2=\bk_1^T\bSigma\bk_1+\bk_2^T\bA_{\perp}\bA_{\perp}^T\bk_2.
	\end{equation}
	The first constraint $\bA\bx=\balpha$ implies $\bA\bA^T\bk_1=\balpha$, i.e., $\bk_1=\bSigma^{-1}\balpha$. Therefore, the minimum of $\|\bx\|^2$ is achieved at $\bk_2=\bzero$, so that
	\begin{equation}\label{}
	\min_{\bk_1,\bk_2} \|\bx\|^2=\bk_1^T\bSigma\bk_1=\balpha^T\bSigma^{-1}\balpha.
	\end{equation}
\end{proof}

\begin{theorem}\label{prop2_commentsGK}
	The GK bound cannot be improved via optimization over subsets of $\{A_1,A_2,\dots,A_N\}$.
\end{theorem}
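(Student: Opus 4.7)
The plan is to express the GK bound on any subset via Lemma \ref{prop1_commentsGK}, and then to exhibit an explicit feasible point of that subset problem --- built by restricting a minimizer of the full problem to rows indexed by the subset --- whose objective value equals $\lGK$. The inequality $\lGK(S) \le \lGK$ is then immediate.

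Concretely, I would fix $S \subseteq \{1, \ldots, N\}$ of size $n \le N$ and let $\balpha_S \in \mathbb{R}^{n}$, $\bSigma_S \in \mathbb{R}^{n \times n}$ denote the subvector of $\balpha$ and the principal submatrix of $\bSigma$ indexed by $S$; write $\lGK(S) = \balpha_S^T \bSigma_S^{-1} \balpha_S$ for the GK bound applied to $\{A_i : i \in S\}$, which is well-defined because $\bSigma_S$ inherits positive definiteness from $\bSigma$. Applying Lemma \ref{prop1_commentsGK} to the full collection with $M = N$ gives $\lGK = \min \Of{\|\bx\|^2 : \bA\bx = \balpha,\ \bA\bA^T = \bSigma}$ over $\bA \in \mathbb{R}^{N \times N}$ and $\bx \in \mathbb{R}^{N}$; let $(\bA^*, \bx^*)$ be such a minimizer.

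Next, let $\bE_S \in \{0,1\}^{n \times N}$ be the row-selection matrix whose rows are the standard basis vectors indexed by $S$. The identities $\bE_S \balpha = \balpha_S$ and $\bE_S \bSigma \bE_S^T = \bSigma_S$ are immediate from the definition of $\bE_S$, so the pair $(\bE_S \bA^*, \bx^*)$ satisfies $(\bE_S \bA^*)\bx^* = \balpha_S$ and $(\bE_S \bA^*)(\bE_S \bA^*)^T = \bSigma_S$; it is therefore feasible for the subset instance of \eqref{temp_problem_commentsGK} with ambient dimension $M = N \ge n$. Invoking Lemma \ref{prop1_commentsGK} a second time --- now applied to the subset $\{A_i : i \in S\}$ --- yields $\lGK(S) \le \|\bx^*\|^2 = \lGK$. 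Since $S$ was arbitrary, no subset of $\{A_1, \ldots, A_N\}$ can yield a sharper GK bound than the full collection.

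The argument is essentially a two-line verification once Lemma \ref{prop1_commentsGK} is in hand, so there is no serious obstacle to completing the proof; the entire conceptual content is contained in that lemma, which recasts $\lGK$ as an infimum over factorizations of the pair $(\balpha, \bSigma)$. Such factorizations are manifestly monotone under row restriction, and this monotonicity is precisely the non-improvement statement being claimed.
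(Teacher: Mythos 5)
Your proof is correct and follows essentially the same route as the paper: both reduce the claim to Lemma \ref{prop1_commentsGK} and observe that the subset instance of \eqref{temp_problem_commentsGK} is a relaxation of the full instance, so its optimal value cannot exceed $\lGK$. Your explicit feasible point $(\bE_S\bA^*,\bx^*)$ is just a concrete way of stating the paper's ``subset of constraints'' relaxation argument.
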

\begin{proof}
	Denoting $\bA=\left(\begin{array}{c}
	\ba_1^T\\
	\ba_2^T\\
	\vdots\\
	\ba_N^T\\
	\end{array}
	\right)$, the first constraint of (\ref{temp_problem_commentsGK}) is equivalent to $N$ constraints $\ba_i^T\bx=\alpha_i, i=1,\dots,N$, and the second constraint of (\ref{temp_problem_commentsGK}) is equivalent to $N^2$ constraints $\ba_i^T\ba_j=\bSigma_{ij}, i=1,\dots,N, j=1,\dots, N$.
	
	By selecting a subset of $\{1,2,\dots,N\}$, the resulting GK bound is the solution of a relaxed problem of (\ref{temp_problem_commentsGK}) with a subset of constraints on $\bx$ and $\ba_i, i=1,\dots, N$. Since the objective value of the relaxed problem must be no more than the original problem (\ref{temp_problem_commentsGK}), the GK bound using a subset cannot be higher than the GK bound using full information.
\end{proof}
\subsection{Iterative implementation of the GK bound}
\begin{theorem}\label{prop3_commentsGK}
	The GK bound can be computed iteratively. More specifically, for $n=1,\dots,N$, denote $\lGK(n)$ as the GK bound using the information of $A_1,\dots, A_n: \balpha_n=\left(P(A_1),\dots,P(A_n)\right)^T$, $\bbeta_n=\left(P(A_1\cap A_n),\dots,P(A_{n-1}\cap A_n)\right)^T$ and $\bSigma_{n}$, the $n\times n$ upper left sub-matrix of $\bSigma$, then if $\alpha_n-\bbeta_n^H\bSigma_{n-1}^{-1}\bbeta_n>0$, we have
	\begin{equation}\label{temp_eq_commentsGK}
	\lGK(n)=\lGK(n-1)+\frac{1}{\alpha_n-\bbeta_n^H\bSigma_{n-1}^{-1}\bbeta_n}\balpha_n^T\left(
	\begin{array}{cc}
	\bb_n\bb_n^T & \bb_n \\
	\bb_n^T & 1 \\
	\end{array}
	\right)
	\balpha_n,
	\end{equation}
	where $\bb_n=-\bSigma_{n-1}^{-1}\bbeta_n$.
	
	The matrix $\bSigma_{n}$ is not invertible if and only if $\alpha_n-\bbeta_n^H\bSigma_{n-1}^{-1}\bbeta_n=0$. The last case $\alpha_n-\bbeta_n^H\bSigma_{n-1}^{-1}\bbeta_n<0$ never happens, i.e., the following inequality holds
	\begin{equation}\label{invertible_condition_commentsGK}
	\begin{split}
	P(A_n)&\ge\left(P(A_1\cap A_n),\dots,P(A_{n-1}\cap A_n)\right)\cdot \\
	&\left(
	\begin{array}{ccc}
	P(A_1\cap A_1)  & \dots & P(A_1\cap A_{n-1}) \\
	P(A_2\cap A_1)  & \dots & P(A_2\cap A_{n-1}) \\
	\vdots &  \ldots & \vdots \\
	P(A_{n-1}\cap A_1) & \dots & P(A_N\cap A_{n-1}) \\
	\end{array}
	\right)^{-1}       \left(\begin{array}{c}
	P(A_1\cap A_n)\\
	P(A_2\cap A_n)\\
	\vdots\\
	P(A_{n-1}\cap A_n)\\
	\end{array}
	\right),
	\end{split}
	\end{equation}
	for $n=1,\dots,N$.
\end{theorem}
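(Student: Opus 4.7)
The natural tool is the Schur complement. Partition
\[
\bSigma_n = \begin{pmatrix} \bSigma_{n-1} & \bbeta_n \\ \bbeta_n^T & \alpha_n \end{pmatrix},
\]
and set $s_n := \alpha_n - \bbeta_n^T\bSigma_{n-1}^{-1}\bbeta_n$. The plan proceeds in three steps: derive the block inverse of $\bSigma_n$ in terms of $\bSigma_{n-1}^{-1}$ and $s_n$; substitute into $\lGK(n) = \balpha_n^T\bSigma_n^{-1}\balpha_n$ to extract the recurrence; and use the Schur determinant identity together with positive semi-definiteness of $\bSigma$ to settle the invertibility dichotomy and the sign of $s_n$.

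\textbf{Step 1: iterative formula.} Under the hypothesis $\bSigma_{n-1}$ invertible and $s_n>0$, the standard block-matrix inversion identity gives
\[
\bSigma_n^{-1} = \begin{pmatrix} \bSigma_{n-1}^{-1} & \bzero \\ \bzero^T & 0 \end{pmatrix} + \frac{1}{s_n}\begin{pmatrix} \bb_n\bb_n^T & \bb_n \\ \bb_n^T & 1 \end{pmatrix},
\]
with $\bb_n = -\bSigma_{n-1}^{-1}\bbeta_n$. Writing $\balpha_n = (\balpha_{n-1}^T,\,\alpha_n)^T$ and sandwiching by $\balpha_n^T$ on the left and $\balpha_n$ on the right, the first block contributes exactly $\balpha_{n-1}^T\bSigma_{n-1}^{-1}\balpha_{n-1} = \lGK(n-1)$, while the second block produces the additive term stated in \eqref{temp_eq_commentsGK}.

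\textbf{Step 2: invertibility and sign.} The classical Schur determinant identity $\det\bSigma_n = \det(\bSigma_{n-1})\cdot s_n$ shows at once that, with $\bSigma_{n-1}$ invertible, $\bSigma_n$ is singular exactly when $s_n=0$. For the sign, note $\bSigma_{ij} = P(A_i\cap A_j) = \EE[\bone_{A_i}\bone_{A_j}]$, so for any $\bc\in\mathbb{R}^N$,
\[
\bc^T\bSigma\bc = \EE\left[\left(\sum_i c_i\bone_{A_i}\right)^{2}\right]\ge 0;
\]
thus $\bSigma$ and every principal submatrix $\bSigma_n$ are positive semi-definite. When $\bSigma_{n-1}\succ 0$, the standard Schur characterization of PSD block matrices forces $s_n\ge 0$, excluding the case $s_n<0$ and giving exactly the inequality \eqref{invertible_condition_commentsGK}.

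\textbf{Expected obstacle.} The algebra is routine once the Schur complement is in hand; the only genuine subtlety is the degenerate case in which $\bSigma_{n-1}$ itself fails to be invertible, so that the recursion cannot be started in the stated form. As already noted in Section~\ref{subsection_GK_bound} following Kounias, $\balpha$ lies in the range of $\bSigma$, so passing to a maximal linearly-independent subfamily does not alter the GK bound. The iteration should therefore be read under the invertibility hypothesis, with $s_n = 0$ interpreted as ``$A_n$ contributes no new linear information,'' in which case $\lGK(n) = \lGK(n-1)$ and the iteration simply stalls at that index.
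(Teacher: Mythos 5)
Your proposal is correct, and its computational core (the block partition of $\bSigma_n$, the matrix-inversion/Schur-complement identity for $\bSigma_n^{-1}$, and sandwiching with $\balpha_n$ to peel off $\lGK(n-1)$) is exactly the paper's Step. Where you genuinely diverge is in justifying the invertibility dichotomy and the sign of $s_n=\alpha_n-\bbeta_n^T\bSigma_{n-1}^{-1}\bbeta_n$: the paper obtains $s_n\ge 0$, i.e.\ \eqref{invertible_condition_commentsGK}, as a corollary of \cref{prop2_commentsGK} (the GK bound is monotone under adding events, so the increment in \eqref{temp_eq_commentsGK} cannot be negative), and it merely asserts the ``singular iff $s_n=0$'' claim; you instead argue intrinsically, observing that $\bSigma_{ij}=\EE[\bone_{A_i}\bone_{A_j}]$ makes $\bSigma$ a second-moment (Gram) matrix, hence positive semi-definite along with all its principal submatrices, so the Schur-complement characterization gives $s_n\ge 0$ directly, and the determinant identity $\det\bSigma_n=\det(\bSigma_{n-1})\,s_n$ settles singularity. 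Your route is more self-contained (it does not lean on \cref{prop2_commentsGK}, and it actually proves the singularity claim the paper leaves implicit), and it avoids the slight awkwardness that the paper's recurrence was derived under $s_n>0$ yet is then invoked to rule out $s_n<0$; the paper's route is shorter given that \cref{prop2_commentsGK} has already been established. Your closing remark on the degenerate case ($\bSigma_{n-1}$ singular, handled by passing to a maximal subfamily since $\balpha$ lies in the range of $\bSigma$) matches the convention the paper adopts in Section~\ref{subsection_GK_bound}.
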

\begin{proof}
	Note that
	\begin{equation}\label{}
	\bSigma_n=\left(
	\begin{array}{cc}
	\bSigma_{n-1} & \bbeta_n \\
	\bbeta_n^T & \alpha_n \\
	\end{array}
	\right).
	\end{equation}
	and by the matrix inverse lemma for a Hermitian matrix \cite{HornBook1986}, we have
	\begin{equation}\label{denominator_commentsGK}
	\bSigma_n^{-1}=\left(
	\begin{array}{cc}
	\bSigma_{n-1}^{-1} & \bzero_n \\
	\bzero_n^T & 0 \\
	\end{array}
	\right)+\frac{1}{\alpha_n-\bbeta_n^H\bSigma_{n-1}^{-1}\bbeta_n}\left(
	\begin{array}{cc}
	\bb_n\bb_n^T & \bb_n \\
	\bb_n^T & 1 \\
	\end{array}
	\right)
	\end{equation}
	Substituting to $\lGK(n)=\balpha_n\bSigma_n^{-1}\balpha_n$, $\lGK(n)$ can be computed using $\lGK(n-1)$.
	
	Since we have proved $\lGK(n)\ge \lGK(n-1)$ in Theorem \ref{prop2_commentsGK}, the inequality is directly from $\alpha_n-\bbeta_n^H\bSigma_{n-1}^{-1}\bbeta_n\ge 0$ in (\ref{temp_eq_commentsGK}) of Theorem \ref{prop3_commentsGK}.
\end{proof}

\noindent
{\bf Recent works:} We close this survey by referring the reader to \cite{Yang2014ISIT,Yang2015b,Yang2015,Yang2016,Yang2016a} for the most recent works on this topic.

\printbibliography




\end{document}